\theoremstyle{plain}
\theoremstyle{definition}
\newtheorem{definition}{Definition}[section]
\newtheorem{example}{Example}[section]
\newtheorem{remark}{Remark}[section]
\newtheorem{theorem}{Theorem}[section]
\newtheorem{lemma}{Lemma}[section]
\newtheorem{corollary}{Corollary}[section]
\numberwithin{equation}{section}
\begin{document}
\openup 0.8\jot
\title{\Large\bf Common fixed point theorems in $C^*$-algebra-valued metric spaces \thanks{This work is supported
by National Science Foundation of China (10971011,11371222)} }
\author{Xin Qiaoling $^{1}$, Jiang Lining $^{1}$ \thanks{E-mail address: jianglining@bit.edu.cn}, Ma Zhenhua $^{1,2}$}
\date{}
\maketitle\begin{center}
\begin{minipage}{16cm}
{\small 1. \it School of Mathematics and Statistics, Beijing Institute of Technology, Beijing, 100081, P. R. China

 2. \it Department of Mathematics, Institute of Architecture Civil Engineering, Zhangjiakou, 075000, P. R. China}
\end{minipage}
\end{center}
\vspace{0.05cm}
\begin{center}
\begin{minipage}{16cm}
{\small {\bf Abstract}:
In this paper, we establish coincidence fixed point and common fixed point theorems for two mappings in complete $C^*$-algebra-valued metric spaces which satisfy new contractive conditions. Some applications of our obtained results are given.
}
\endabstract
\end{minipage}\vspace{0.10cm}
\begin{minipage}{16cm}
{\bf  Keywords}: common fixed point, $C^*$-algebra, weakly compatible, coincidence point\\
Mathematics Subject Classification (2010): 47H10
\end{minipage}
\end{center}
\begin{center} \vspace{0.01cm}
\end{center}

%%\title{\bf Periodic solutions to the non--isothermal phase separation model  with  obstacle}

%%\author{{ Jiashan Zheng\footnote{E-mail address: zhengjiashan2008@163.com}  }\\[0.2cm]
%%{\it \small School of Mathematics, Beijing Institute of Technology, Beijing 100081, P.R. China}\\
%%$^b${\it\small Department of Mechanical and Aerospace Engineering,}\\
%%{\it \small University of California at San Diego La Jolla, CA 92093-0411, USA}
%}
%%\date{}

%\date{23 March, 2009}

%%\maketitle \vspace{0.3cm} \noindent
%%\begin{abstract}
%%In this paper, we investigate the existence and uniqueness of time
%%periodic solutions to the multi--dimensional non--isothermal phase
%%separation model with obstacle  by the viscosity
%%approach and
 %%the Leray-Schauder's fixed point theorem.

%This paper is concerned with a class of biological models which
%consists of  nonlinear diffusion equations and a hysteresis operator
%describing the relationship between some variables of the equations.
%We show the existence  of solutions to the problem under consideration by
%using the approximation method.
%%\end{abstract}

%%\vspace{0.3cm} \noindent {\bf\em Key words:}~ periodic solution,
%%non--isothermal phase separation model, constraint,  fixed point
%%theorem
%%\vspace{0.3cm}

%%\noindent {\bf\em 2010 Mathematics Subject Classification}:~ 45M15, 47H10, 35K55
\section{Introduction}
The Banach contraction principle, which shows that every contractive mapping has a unique fixed point in a complete metric space,
has been extended in many directions ([1-15,17-22]).
One of the branches of this theory is devoted to the study of common fixed points.
In 1966, Jungck \cite{Jungck66} initially investigated common fixed points for commuting mappings in metric spaces.
The concept of
commuting mappings has been weakened in various directions and in several ways over the years.
One such notion which is weaker than commuting is the concept of compatibility introduced by Jungck \cite{Jungck86}.
Subsequently, several authors have obtained coincidence and common fixed point results for mappings, utilizing this concept and its generalizations, see \cite{M.Abb,Beri,Ciri,Jankovi,Jungck09,Shata} and references contained therein.

On the other hand, there are a lot of fixed and common fixed point results in different types of spaces. For
example, cone metric spaces \cite{Huang}, fuzzy metric spaces \cite{Abu}, uniform spaces \cite{Tarafd}, noncommutative Banach spaces \cite{Xin}, and so on.
In 2007, Huang and Zhang \cite{Huang} firstly introduced cone metric spaces which generalized metric spaces, and obtained various fixed point theorems for contractive mappings. The existence of a common fixed point on cone metric spaces was investigated recently in \cite{M.Abb,Choudh,Jankovi,Jungck09}. Very recently, Ma et al. in \cite{Ma} introduced a concept
of $C^*$-algebra-valued metric spaces and presented some fixed point results for mappings under contractive or expansive conditions in these spaces.

In this paper, we will continue to study common fixed points in the frame of $C^*$-algebra-valued metric spaces. More precisely, we
prove some common fixed point theorems for two mappings under a different contractive conditions .
We furnish suitable examples to demonstrate the validity of
the hypotheses of our results. The presented theorems extend and improve some recent results given in \cite{Ma}. In addition,
we establish the existence and uniqueness theorem of a common solution for integral equations.

Throughout this paper, the letter $\mathcal A$ will denote an unital $C^*$-algebra. Set $\mathcal A_h\ =\ \{a\in \mathcal A\colon x=x^*\}$.
We call an element $x\in \mathcal A$ a positive element, denote it by
$0_{\mathcal A}\preceq a$, if $x=x^*$
and $\sigma(x)\subseteq [0,+\infty)$, where $0_{\mathcal A}$ is the zero element in $\mathcal A$ and $\sigma(x)$ is the spectrum of $x$. There is a natural partial ordering on $\mathcal A_h$ given by
$a \preceq b$ if and only if $0_{\mathcal A}\preceq b-a$. From now on, ${\mathcal A}_+$ and ${\mathcal A}'$ will denote the set $\{a\in \mathcal A\colon 0_{\mathcal A}\preceq a\}$ and the set $\{a\in \mathcal A\colon ab=ba, \forall b\in\mathcal A\}$, respectively.

Let us recall the following definitions and results which will be needed in what follows. For more details, one can see \cite{Ma}.

\begin{definition}
Let $X$ be a nonempty set. Suppose that the mapping $d\colon X\times X\rightarrow \mathcal A$ is defined, with the following properties:

(1)\ $0_{\mathcal A}\preceq d(x,y)$ for all $x$ and $y$ in $X$;

(2)\ $d(x,y)=0_{\mathcal A}$ if and only if $x=y$;

(3)\ $d(x, y)=d(y, x)$ for all $x$ and $y$ in $X$;

(4)\ $d(x, y)\preceq d(x, z)+d(z, y)$ for all $x$, $y$ and $z$ in $X$.\\
Then $d$ is said to be a $C^*$-algebra-valued metric on $X$, and $(X,{\mathcal A},d)$ is said to be a $C^*$-algebra-valued metric space.
\end{definition}

\begin{definition}
Suppose that $(X,{\mathcal A},d)$ is a $C^*$-algebra-valued metric space. Let $\{x_n\}_{n=1}^\infty$ be a sequence in $X$ and $x\in X$. If
$d(x_n, x)\overset{\|\cdot\|_{\mathcal A}}{\longrightarrow} 0_{\mathcal A}$ $(n\rightarrow\infty)$, then it is said that $\{x_n\}$ converges to $x$, and we denote it by $\lim\limits_{n\rightarrow\infty}x_n=x$.
If for any $p\in\Bbb{N}$,
$d(x_{n+p}, x_n)\overset{\|\cdot\|_{\mathcal A}}{\longrightarrow} 0_{\mathcal A}$ $(n\rightarrow\infty)$, then $\{x_n\}$ is called a
Cauchy sequence in $X$.
\end{definition}

If every Cauchy sequence is convergent in $X$, then
$(X,{\mathcal A},d)$ is called a complete $C^*$-algebra-valued metric space.

It is obvious that any Banach space must be a complete $C^*$-algebra-valued metric space. Moreover,
$C^*$-algebra-valued metric spaces generalize normed linear spaces and metric spaces.

\begin{definition}
Let $Y$ be a subset of $X$. If $(Y,{\mathcal A},d)$ is a complete $C^*$-algebra-valued metric space, then we say that $Y$ is complete in $X$.
\end{definition}

\begin{example}
Let $X=\Bbb{R}$ and $\mathcal{A}=M_2(\Bbb{C})$,
the set of bounded linear operators on a Hilbert space $\Bbb{C}^2$. Define $d\colon X\times X\rightarrow \mathcal{A}$ by
$d(x,y)=\left[
          \begin{array}{cc}
            |x-y| & 0 \\
            0 & k|x-y| \\
          \end{array}
        \right]$,
where $k>0$ is a constant.
Then $(X,\mathcal{A},d)$ is a complete $C^*$-algebra-valued metric space. If we choose $Y=[0,1]\subseteq X$, we can show $Y$ is complete in $X$.
If $Y=(-\infty,0)\cup(0,+\infty)$, then $Y$ is not complete in $X$. Indeed, taking $\{x_n\}\subseteq Y$ such that $x_n=\frac{1}{n}$, we get
$d(x_n,0)=\left[
          \begin{array}{cc}
            \frac{1}{n} & 0 \\
            0 & \frac{k}{n} \\
          \end{array}
        \right]\overset{\|\cdot\|_{\mathcal A}}{\longrightarrow} 0_{\mathcal A}$,
        which means $x_n\rightarrow 0\notin Y$ $(n\rightarrow\infty)$.
\end{example}

\begin{lemma}\label{L1}
(1)\ If $\{b_n\}_{n=1}^\infty\subseteq\mathcal A$ and $\lim\limits_{n\rightarrow\infty}b_n=0_{\mathcal A}$, then for any $a\in \mathcal A$, $\lim\limits_{n\rightarrow\infty}a^*b_na=0_{\mathcal A}$.

(2)\ If $a,b\in{\mathcal A}_h$ and $c\in {\mathcal A}_+'$, then
$a\preceq b$ deduces $ca\preceq cb$, where ${\mathcal A}_+'={\mathcal A}_+\cap {\mathcal A}'$.

(3)\ Let $\{x_n\}_{n=1}^\infty$ be a sequence in $X$. If $\{x_n\}$ converges to $x$ and $y$, respectively, then $x=y$. That is,
the limit of a convergent sequence in a $C^*$-algebra-valued metric space is unique.
\end{lemma}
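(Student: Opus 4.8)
The plan is to treat the three parts separately, since each rests on a different standard fact about $C^*$-algebras.

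For part (1) I would simply invoke the $C^*$-norm estimate $\|a^* b_n a\| \le \|a^*\|\,\|b_n\|\,\|a\| = \|a\|^2\|b_n\|$, using submultiplicativity of the norm together with $\|a^*\|=\|a\|$. Since $\|b_n\|\to 0$ by hypothesis, the right-hand side tends to $0$, so $\|a^* b_n a\|\to 0$, i.e.\ $a^* b_n a\to 0_{\mathcal A}$ in norm. There is no real obstacle here.

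For part (3) I would use the triangle inequality (axiom (4)) to write $d(x,y)\preceq d(x,x_n)+d(x_n,y)$, together with the symmetry axiom (3). I would then apply the monotonicity of the norm on positive elements --- if $0_{\mathcal A}\preceq u\preceq v$ then $\|u\|\le\|v\|$ --- followed by the triangle inequality of the norm, to obtain $\|d(x,y)\|\le \|d(x_n,x)\|+\|d(x_n,y)\|$. Both terms on the right tend to $0$ by the definition of convergence, so $\|d(x,y)\|=0$, whence $d(x,y)=0_{\mathcal A}$, and axiom (2) forces $x=y$.

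The substantive step is part (2). Setting $p=b-a$, the hypothesis $a\preceq b$ says $p\in\mathcal A_+$, and I must show $c(b-a)=cb-ca\in\mathcal A_+$, i.e.\ $ca\preceq cb$. The point is that $c$ is a positive element commuting with everything in $\mathcal A$. I would pass to the positive square root $c^{1/2}$, which lies in the closed subalgebra generated by $c$ and hence still commutes with $p$ (and is self-adjoint). Using commutativity, $cp=c^{1/2}c^{1/2}p=c^{1/2}p\,c^{1/2}=(c^{1/2})^{*}p\,c^{1/2}$; since $p\succeq 0_{\mathcal A}$ and $x^{*}px\succeq 0_{\mathcal A}$ for every $x\in\mathcal A$, taking $x=c^{1/2}$ yields $cp\succeq 0_{\mathcal A}$.

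I expect the main obstacle to be justifying that $c^{1/2}$ commutes with $p$ and that the product of two commuting positive elements is positive; both follow from the functional calculus, but they must be cited with care, since $\mathcal A$ is a general (possibly noncommutative) unital $C^*$-algebra and the conclusion genuinely fails without the commutativity assumption $c\in\mathcal A'$.
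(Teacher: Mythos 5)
Your proposal is correct and follows essentially the same route as the paper: the norm estimate $\|a^*b_na\|\leq\|a\|^2\|b_n\|$ for (1), the triangle inequality followed by letting $n\rightarrow\infty$ for (3), and for (2) the square-root-plus-commutativity argument — the paper writes $c(b-a)=e^2d^2=eded=(ed)^*(ed)\in\mathcal A_+$ with $e=c^{1/2}$ and $d=(b-a)^{1/2}$, which is the same idea as your $cp=(c^{1/2})^*p\,c^{1/2}\succeq 0_{\mathcal A}$, you merely taking one square root and invoking $x^*px\succeq 0_{\mathcal A}$ where the paper takes two and invokes $y^*y\succeq 0_{\mathcal A}$. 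If anything, you are more careful than the paper in flagging that $c^{1/2}$ commutes with $b-a$ because it lies in the $C^*$-subalgebra generated by $c$ — a point the paper uses silently in the step $e^2d^2=eded$.
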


\begin{proof}
(1)\ By the following relation $\|a^*b_na-0_{\mathcal A}\|\leqslant\|a\|^2\|b_n\|$,
we can get the desired result.

(2)\ $a\preceq b$ implies $b-a\in{\mathcal A}_+$, and then there is $d\in {\mathcal A}_+$ such that $b-a=d^2$. Again,
$c\in{\mathcal A}_+'$, then $c=e^2$ for some $e\in{\mathcal A}_+$.
Note that
\begin{equation*}
cb-ca=c(b-a)=e^2d^2=eded=(ed)^*ed\in{\mathcal A}_+,
\end{equation*}
which shows
$ca\preceq cb$.

(3)\ Using the triangle inequality, we get
\begin{equation*}
d(x,y)\preceq d(x_n,x)+d(x_n,y),
\end{equation*}
which, together with
$\lim\limits_{n\rightarrow\infty}x_n=x$ and $\lim\limits_{n\rightarrow\infty}x_n=y$, deduces
$d(x,y)\overset{\|\cdot\|_{\mathcal A}}{\longrightarrow} 0_{\mathcal A}$ $(n\rightarrow\infty)$. Hence $d(x,y)=0_{\mathcal A}$, and then $x=y$.
\end{proof}

\begin{remark}
In Lemma 1.1 (2), the element $c\in {\mathcal A}_+'$ is necessary.
For example, let $a=\left[
          \begin{array}{cc}
            0 & 3 \\
            3 & 1 \\
          \end{array}
        \right],$
$b=\left[
          \begin{array}{cc}
            1 & 1 \\
            1 & 6 \\
          \end{array}
        \right]$
and $c=\left[
          \begin{array}{cc}
            1 & 1 \\
            1 & 1 \\
          \end{array}
        \right].$
One can show that

(i)\ $a,b\in{\mathcal A}_h$, with $a\preceq b$ since
$b-a=\left[
          \begin{array}{cc}
            1 & -2 \\
            -2 & 5 \\
          \end{array}
        \right]\in{\mathcal A}_+$.

(ii)\ $c\in{\mathcal A}_+$ but $c\notin{\mathcal A}'$.

(iii) Since
$cb-ca=\left[
          \begin{array}{cc}
            -1 & 3 \\
            -1 & 3 \\
          \end{array}
        \right]$, we know that $cb-ca\notin{\mathcal A}_+$.
\end{remark}

The following definition extends the concept
of compatible mappings of Jungck \cite{Jungck86}, from metric spaces to $C^*$-algebra-valued metric spaces.

\begin{definition}
The two mappings $T$ and $S$ on a $C^*$-algebra-valued metric space $(X,\mathcal{A},d)$ is said to be compatible, if
for arbitrary sequence $\{x_n\}_{n=1}^\infty\subseteq X$, such that
$\lim\limits_{n\rightarrow\infty}Tx_n
=\lim\limits_{n\rightarrow\infty}Sx_n=t\in X$,
then
$d(TSx_n,STx_n)\overset{\|\cdot\|_{\mathcal A}}{\longrightarrow} 0_{\mathcal A}$ $(n\rightarrow\infty)$.
\end{definition}

\begin{definition}
Let $T$ and $S$ be two mappings of the set $X$.

(1)\ If $x=Tx=Sx$ for some $x\in X$, then $x$ is called a common fixed point of $T$ and $S$.

(2)\ If $z=Tx=Sx$ for some $z\in X$, then $x$ is called a coincidence point of $T$ and $S$, and $z$ is called a point of coincidence of $T$ and $S$.

(3)\ If $T$ and $S$ commute at all of their coincidence points, i.e.,
$TSx=STx$ for all $x\in \{x\in X\colon Tx=Sx\}$, then $T$ and $S$ are called weakly compatible.
\end{definition}

In metric spaces if the mappings $T$ and $S$ are compatible, then they are weakly compatible, while the converse is not
true \cite{Jungck86}. The same holds for the $C^*$-algebra-valued metric spaces:

\begin{lemma}
If the mappings $T$ and $S$ on the $C^*$-algebra-valued metric space $(X,\mathcal{A},d)$ are compatible, then they are weakly compatible.
\end{lemma}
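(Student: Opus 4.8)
The plan is to reduce the weak-compatibility requirement to a single application of the compatibility hypothesis, via the familiar device of a constant sequence. Since weak compatibility only demands the relation $TSx=STx$ at those points $x$ with $Tx=Sx$, I would fix such a coincidence point $x$ and write $z$ for the common value, so that $Tx=Sx=z$. Because $T$ and $S$ are self-maps of $X$, the point $z$ lies in $X$.

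Next I would substitute the constant sequence $x_n=x$ (for all $n\in\Bbb{N}$) into Definition 1.5. For this sequence $Tx_n=z$ and $Sx_n=z$ for every $n$, so trivially $\lim\limits_{n\rightarrow\infty}Tx_n=\lim\limits_{n\rightarrow\infty}Sx_n=z\in X$, and the hypothesis of compatibility is satisfied. Compatibility then yields $d(TSx_n,STx_n)\overset{\|\cdot\|_{\mathcal A}}{\longrightarrow}0_{\mathcal A}$ $(n\rightarrow\infty)$.

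Finally, since $x_n=x$ is constant, the element $d(TSx_n,STx_n)$ equals the fixed quantity $d(TSx,STx)$ for every $n$; a constant sequence tending to $0_{\mathcal A}$ must itself equal $0_{\mathcal A}$, whence $d(TSx,STx)=0_{\mathcal A}$. Property (2) of the $C^*$-algebra-valued metric then forces $TSx=STx$. As $x$ was an arbitrary coincidence point, $T$ and $S$ commute at each of their coincidence points, i.e.\ they are weakly compatible.

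I do not expect a genuine obstacle here: the entire content lies in choosing the constant sequence so that the coincidence value $z$ plays the role of the common limit, after which metric axiom (2) upgrades the norm convergence to the desired equality of points. The only step deserving a word of care is checking that $z\in X$, which is immediate from $T$ and $S$ being mappings of $X$ into itself.
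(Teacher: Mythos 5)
Your proposal is correct and follows the same route as the paper's own proof: fix a coincidence point, feed the constant sequence $x_n\equiv x$ into the compatibility hypothesis, note that the resulting constant sequence $d(TSx,STx)$ converging to $0_{\mathcal A}$ forces $d(TSx,STx)=0_{\mathcal A}$, and conclude $TSx=STx$ from metric axiom (2). No gaps; the extra remark that $z\in X$ is harmless but not needed.
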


\begin{proof}
Let $Tx=Sx$ for some $x\in X$. It suffices to show that $TSx=STx$.
Putting $x_n\equiv x$ for every $n\in\Bbb{N}$, we have $\lim\limits_{n\rightarrow\infty}Tx_n
=\lim\limits_{n\rightarrow\infty}Sx_n$, and then, since
$T$ and $S$ are compatible, we have
$d(TSx_n,STx_n)\overset{\|\cdot\|_{\mathcal A}}{\longrightarrow} 0_{\mathcal A}$ $(n\rightarrow\infty)$, that is,
$\|d(TSx_n,STx_n)\|\longrightarrow 0$ $(n\rightarrow\infty)$.
Hence $d(TSx,STx)=0_{\mathcal A}$, which means $TSx=STx$.
\end{proof}

The converse does not hold. For example,
let $X=[0,4]$ and ${\mathcal A}=M_2(\Bbb{C})$. Define $d\colon X\times X\rightarrow \mathcal{A}$ by
$d(x,y)=\left[
          \begin{array}{cc}
            |x-y| & 0 \\
            0 & k|x-y| \\
          \end{array}
        \right]$,
where $k\geqslant 0$ is a constant.
Then $(X,\mathcal{A},d)$ is a $C^*$-algebra-valued metric space.
Set
\begin{eqnarray*}
Tx=\left\{\begin{array}{cl}
            3-x & x\in [0,\frac{3}{2}], \\[5pt]
            3 & x\in (\frac{3}{2},4],
          \end{array}\right.\ \ \ \
Sx=\left\{\begin{array}{cl}
             2x & x\in (1,2],\\[5pt]
              x & x\in [0,1]\cup(2,4].
          \end{array}\right.
\end{eqnarray*}
Firstly, we can compute that
the set of their coincidence points is singleton set $\{3\}$, and then we have $T$ and $S$ commute at this point.
Hence, $T$ and $S$ are weakly compatible.
However, we can show they are not compatible. In order to do this, we construct a sequence $\{x_n\}\subseteq X$ such that $x_n=1+\frac{1}{n}\in X$
for $n\in \Bbb{N}$ with $n\geqslant 2$. In this case, we have
$Tx_n=3-(1+\frac{1}{n})=2-\frac{1}{n}$,
and $Sx_n=2(1+\frac{1}{n})=2+\frac{2}{n}$.
Then
$\lim\limits_{n\rightarrow\infty}Tx_n
=\lim\limits_{n\rightarrow\infty}Sx_n=2$. In fact,
we have
\begin{equation*}
d(Tx_n,2)=d(2-\frac{1}{n},2)
=\left[
          \begin{array}{cc}
           \frac{1}{n} & 0 \\
            0 & k\frac{1}{n} \\
          \end{array}
        \right]
\overset{\|\cdot\|_{\mathcal A}}{\longrightarrow} 0_{\mathcal A}\ \ (n\rightarrow\infty)
\end{equation*}
and
\begin{equation*}
d(Sx_n,2)=d(2+\frac{2}{n},2)
=\left[
          \begin{array}{cc}
            \frac{2}{n} & 0 \\
            0 & k\frac{2}{n} \\
          \end{array}
        \right]
\overset{\|\cdot\|_{\mathcal A}}{\longrightarrow} 0_{\mathcal A}\ \ (n\rightarrow\infty).
\end{equation*}
But
\begin{equation*}
d(TSx_n,STx_n)=d(T(2+\frac{2}{n}),S(2-\frac{1}{n}))
=d(3,4-\frac{2}{n})
=\left[
          \begin{array}{cc}
            |1-\frac{2}{n}| & 0 \\
            0 & k|1-\frac{2}{n}| \\
          \end{array}
        \right]
\overset{\|\cdot\|_{\mathcal A}}{\longrightarrow}
\left[
          \begin{array}{cc}
            1 & 0 \\
            0 & k \\
          \end{array}
        \right],
\end{equation*}
which means that
$d(TSx_n,STx_n)\overset{\|\cdot\|_{\mathcal A}}
{\nrightarrow}0_{\mathcal A}.$

The following lemma can be seen in \cite{M.Abb}.
\begin{lemma}\label{L2}
Let $T$ and $S$ be weakly compatible mappings of a set $X$. If $T$ and $S$ have a unique point of coincidence, then it is the unique common fixed point of $T$ and $S$.
\end{lemma}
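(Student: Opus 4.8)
The plan is to let $z$ denote the unique point of coincidence of $T$ and $S$, so that there is some $x\in X$ with $z=Tx=Sx$, and then to show in two stages that $z$ is the unique common fixed point. The whole argument is purely algebraic: no topology and none of the $C^*$-structure is used, only the definition of weak compatibility together with the uniqueness hypothesis.

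First I would establish that $z$ is a common fixed point. Since $Tx=Sx$, the element $x$ is a coincidence point, so weak compatibility yields $TSx=STx$. Substituting $Sx=z$ into the left side and $Tx=z$ into the right side, this reads $Tz=Sz$. Thus $z$ is itself a coincidence point, and the element $Tz=Sz$ is a point of coincidence of $T$ and $S$. By the uniqueness hypothesis this new point of coincidence must agree with $z$, i.e. $Tz=Sz=z$, which is exactly the assertion that $z$ is a common fixed point of $T$ and $S$.

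The second stage is uniqueness. If $w$ is any common fixed point, then $w=Tw=Sw$; in particular $Tw=Sw$, so $w$ is a point of coincidence, witnessed by $w$ itself. Invoking once more the uniqueness of the point of coincidence forces $w=z$, and hence $z$ is the only common fixed point.

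I do not expect a genuine obstacle here, since the lemma is in essence a bookkeeping argument. The one place that requires care is the correct reading of weak compatibility: it asserts $TSx=STx$ only at coincidence points, so one must first check that $x$ qualifies (it does, because $Tx=Sx=z$) before substituting the common value $z$ to extract $Tz=Sz$. The other small point worth emphasizing is the observation that the equality $Tz=Sz$ promotes $Tz$ to a second point of coincidence; it is precisely this that allows the uniqueness hypothesis to collapse it back to $z$ and thereby deliver the fixed-point property.
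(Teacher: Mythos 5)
Your proof is correct and is exactly the standard argument for this lemma, which the paper does not reproduce but instead attributes to \cite{M.Abb}: weak compatibility at the coincidence point $x$ yields $Tz=Sz$, the uniqueness of the point of coincidence collapses this second point of coincidence back to $z$, and any common fixed point is itself a point of coincidence and hence equals $z$. There are no gaps, and your cautionary remarks (verifying that $x$ qualifies as a coincidence point before invoking weak compatibility, and observing that $Tz=Sz$ is a new point of coincidence) are precisely the two steps where the argument could otherwise go astray.
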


\section{Main results}
In this section, we give some common fixed point theorems for two mappings satisfying various contractive conditions in complete $C^*$-algebra-valued metric spaces.

\begin{theorem}\label{Th1}
Let $(X,\mathcal{A},d)$ be a complete $C^*$-algebra-valued metric space.
Suppose that two mappings $T,S\colon X\rightarrow X$ satisfy
\begin{equation}\label{2-1}
d(Tx,Sy)\preceq a^*d(x,y)a, \ \mbox{for\ any}\ x,y\in X,
\end{equation}
where $a\in \mathcal{A}$ with $\|a\|<1$. Then $T$ and $S$ have a unique
common fixed point in $X$.
\end{theorem}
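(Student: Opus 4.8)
The plan is to adapt the classical Picard iteration to the two-map setting by building a single alternating sequence. Fix any $x_0\in X$ and define
$x_{2n+1}=Tx_{2n}$ and $x_{2n+2}=Sx_{2n+1}$ for $n\geqslant 0$. Writing $d_n=d(x_n,x_{n+1})$ and using the symmetry of $d$ together with the contractive hypothesis (\ref{2-1}), I would first verify the one-step estimate $d_{n+1}\preceq a^*d_na$ for every $n$. The even step comes from applying (\ref{2-1}) to the pair $(x_{2n},x_{2n+1})$, and the odd step from applying it to $(x_{2n+2},x_{2n+1})$ after swapping the two arguments (so that $T$ and $S$ sit on the correct sides).

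The next step is to iterate this estimate down to $d_n\preceq (a^*)^n d_0\, a^n$. This rests on the elementary $C^*$-algebraic fact that $0_{\mathcal A}\preceq c$ implies $0_{\mathcal A}\preceq a^*ca$, since $a^*ca=(c^{1/2}a)^*(c^{1/2}a)\in{\mathcal A}_+$; consequently conjugation by $a$ preserves the order, i.e. $u\preceq v\Rightarrow a^*ua\preceq a^*va$, and the induction goes through. Invoking the monotonicity of the $C^*$-norm on positive elements together with $\|(a^*)^n\|=\|a^n\|\leqslant\|a\|^n$, I obtain the scalar decay bound $\|d_n\|\leqslant\|a\|^{2n}\|d_0\|$.

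With this geometric decay established, Cauchyness follows quickly: for $m>n$ the triangle inequality gives $d(x_n,x_m)\preceq\sum_{k=n}^{m-1}d_k$, and passing to norms yields $\|d(x_n,x_m)\|\leqslant\|d_0\|\sum_{k=n}^{m-1}\|a\|^{2k}\leqslant\|d_0\|\,\|a\|^{2n}/(1-\|a\|^2)\to 0$. By completeness $x_n\to x$ for some $x\in X$. To see that $x$ is a common fixed point, I estimate $d(Tx,x)\preceq d(Tx,Sx_{2n+1})+d(x_{2n+2},x)$, bound the first summand by $\|a\|^2\|d(x,x_{2n+1})\|$ via (\ref{2-1}), and let $n\to\infty$; both terms vanish, forcing $d(Tx,x)=0_{\mathcal A}$, i.e. $Tx=x$. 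The same argument comparing $Sx$ against $Tx_{2n}=x_{2n+1}$ gives $Sx=x$. Uniqueness is immediate: if $x$ and $y$ are both common fixed points, then $d(x,y)=d(Tx,Sy)\preceq a^*d(x,y)a$, whence $\|d(x,y)\|\leqslant\|a\|^2\|d(x,y)\|$, and $\|a\|<1$ forces $d(x,y)=0_{\mathcal A}$.

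The main obstacle is the passage from order estimates to norm estimates, which is the only place where genuine $C^*$-algebra theory enters rather than formal manipulation. One must be careful that $\preceq$ is merely a partial order, so Lemma \ref{L1}(2), whose hypothesis requires the multiplier to lie in ${\mathcal A}_+'$ and hence to commute with everything, does \emph{not} apply to conjugation by a general $a$; the correct tools are instead the order-preservation of the map $c\mapsto a^*ca$ and the monotonicity $0\preceq u\preceq v\Rightarrow\|u\|\leqslant\|v\|$ of the norm on ${\mathcal A}_+$. Once these two facts are isolated, the remainder reduces to a routine contraction-mapping argument.
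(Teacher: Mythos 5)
Your proposal is correct and takes essentially the same route as the paper: the same alternating iteration $x_{2n+1}=Tx_{2n}$, $x_{2n+2}=Sx_{2n+1}$, the same order-preservation of conjugation $u\preceq v\Rightarrow a^*ua\preceq a^*va$ (the paper cites Murphy, Theorem 2.2.5; you correctly note that Lemma \ref{L1}(2) is the wrong tool here), geometric decay of $\|d(x_n,x_{n+1})\|$, and the identical norm argument for uniqueness. The only harmless deviations are technical: you pass from order to norm via monotonicity of the norm on ${\mathcal A}_+$ ($0\preceq u\preceq v\Rightarrow\|u\|\leqslant\|v\|$), whereas the paper writes $d(x_1,x_0)=b^2$ and dominates $|ba^k|^2$ by scalar multiples of $1_{\mathcal A}$; and you verify $Tx=x$ by a second sequence estimate, whereas the paper, having shown $Sx=x$, gets it for free from $0_{\mathcal A}\preceq d(Tx,x)=d(Tx,Sx)\preceq a^*d(x,x)a=0_{\mathcal A}$.
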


\begin{proof}
Let $x_0\in X$ and construct a sequence $\{x_n\}_{n=0}^\infty\subseteq X$ by the way: $x_{2n+1}=Tx_{2n}$, $x_{2n+2}=Sx_{2n+1}$. From (\ref{2-1}), we get
\begin{eqnarray*}
    \begin{array}{rcl}
d(x_{2n+2},x_{2n+1})&=&d(Sx_{2n+1},Tx_{2n})\\[5pt]
      &\preceq&a^*d(x_{2n+1},x_{2n})a\\[5pt]
      &\preceq&(a^*)^2d(x_{2n},x_{2n-1})a^2\\[5pt]

      &&\cdots\\[5pt]
      &\preceq&(a^*)^{2n+1}d(x_1,x_0)a^{2n+1},
 \end{array}
\end{eqnarray*}
where we use the property: if $b,c\in {\mathcal A}_h$, then $b\preceq c$ implies $a^*ba\preceq a^*ca$ (Theorem 2.2.5 in \cite{Murphy}).

Similarly,
\begin{eqnarray*}
    \begin{array}{rcl}
d(x_{2n+1},x_{2n})&=&d(Tx_{2n},Sx_{2n-1})\\[5pt]
      &\preceq&a^*d(x_{2n},x_{2n-1})a\\[5pt]
      &&\cdots\\[5pt]
      &\preceq&(a^*)^{2n}d(x_1,x_0)a^{2n}.
 \end{array}
\end{eqnarray*}
Now, we can obtain for any $n\in\Bbb{N}$
\begin{equation*}
d(x_{n+1},x_{n})
      \preceq(a^*)^{n}d(x_1,x_0)a^{n},
\end{equation*}
then for any $p\in\Bbb{N}$, the triangle inequality tells that
\begin{eqnarray*}
    \begin{array}{rcl}
d(x_{n+p},x_{n})&\preceq&d(x_{n+p},x_{n+p-1})+d(x_{n+p-1},x_{n+p-2})
              +\cdots+d(x_{n+1},x_{n})\\[5pt]
      &\preceq&\sum\limits_{k=n}^{n+p-1}(a^*)^kd(x_1,x_0)a^k\\[5pt]
      &\preceq&\sum\limits_{k=n}^{n+p-1}
      (ba^k)^*ba^k\\[5pt]
      &\preceq&\sum\limits_{k=n}^{n+p-1}
      |ba^k|^2\\[5pt]
      &\preceq&\sum\limits_{k=n}^{n+p-1}
      \||ba^k|^2\|1_{\mathcal A}\\[5pt]
      &\preceq&\|b\|^2 1_{\mathcal A}\sum\limits_{k=n}^{n+p-1}
      \|a^k\|^2   \rightarrow 0_{\mathcal A} \ (n\rightarrow\infty),
 \end{array}
\end{eqnarray*}
where $1_{\mathcal A}$ is the unit element in ${\mathcal A}$ and $d(x_1,x_0)=b^2$ for some $b\in{\mathcal A}_+$, this can be done since $d(x_1,x_0)\in{\mathcal A}_+$ (Theorem 2.2.1 in \cite{Murphy}).

From Definition 1.2, we get that $\{x_n\}_{n=1}^\infty$ is a Cauchy sequence in $X$.
The completion of $X$ implies that there exists $x\in X$ such that $\lim\limits_{n\rightarrow\infty}x_n=x$.

Now, using the triangle inequality and (\ref{2-1}),
\begin{equation*}
\begin{array}{rcl}
d(x,Sx)&\preceq&d(x,x_{2n+1})+d(x_{2n+1},Sx)\\[5pt]
       &\preceq&d(x,x_{2n+1})+d(Tx_{2n},Sx)\\[5pt]
       &\preceq&d(x,x_{2n+1})+a^*d(x_{2n},x)a.
\end{array}
\end{equation*}
Taking $n\rightarrow\infty$, the right hand side of the above inequality approaches $0_\mathcal A$ (Lemma 1.1 (1)), and then $Sx=x$.
Again, noting that
\begin{equation*}
0_{\mathcal A}\preceq d(Tx,x)=d(Tx,Sx)\preceq a^*d(x,x)a=0_{\mathcal A},
\end{equation*}
we have $d(Tx,x)=0_{\mathcal A}$, which means $Tx=x$.

In the following we will show the uniqueness of common fixed points in $X$. For this purpose, assume that there is another point $y\in X$ such that $Ty=Sy=y$.
From (\ref{2-1}), we know
\begin{equation*}
d(x,y)=d(Tx,Sy)\preceq a^*d(x,y)a,
\end{equation*}
which together with $\|a\|<1$ yields that
\begin{equation*}
0\leqslant\|d(x,y)\|\leqslant \|a\|^2\|d(x,y)\|<\|d(x,y)\|.
\end{equation*}
Thus $\|d(x,y)\|=0$ and $d(x,y)=0_{\mathcal A}$,
which gives $y=x$. Hence, $T$ and $S$ have a unique
common fixed point in $X$.
\end{proof}

If one checks the proof of Theorem 2.1, one can easily obtain the following result.

\begin{corollary}
Let $(X,\mathcal{A},d)$ be a complete $C^*$-algebra-valued metric space.
Suppose that the mappings $T,S\colon X\rightarrow X$ satisfy
\begin{equation*}
\|d(Tx,Sy)\|\leqslant \|a\|\|d(x,y)\|, \ \mbox{for\ any}\ x,y\in X,
\end{equation*}
where $a\in \mathcal{A}$ with $\|a\|<1$. Then $T$ and $S$ have a unique
common fixed point in $X$.
\end{corollary}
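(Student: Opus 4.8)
The plan is to imitate the proof of Theorem~\ref{Th1} almost verbatim, replacing every manipulation in the order $\preceq$ by the corresponding scalar inequality between norms. Set $\lambda=\|a\|<1$. Starting from an arbitrary $x_0\in X$, I would build the same sequence $x_{2n+1}=Tx_{2n}$, $x_{2n+2}=Sx_{2n+1}$. Using the symmetry of $d$ together with the hypothesis $\|d(Tx,Sy)\|\leqslant\lambda\|d(x,y)\|$, I obtain $\|d(x_{2n+2},x_{2n+1})\|=\|d(Tx_{2n},Sx_{2n+1})\|\leqslant\lambda\|d(x_{2n+1},x_{2n})\|$, and similarly for the odd-indexed gaps, so that by induction
\[
\|d(x_{n+1},x_n)\|\leqslant\lambda^{n}\|d(x_1,x_0)\|\quad\text{for all } n\in\Bbb{N}.
\]

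Next I would show that $\{x_n\}$ is Cauchy. For $p\in\Bbb{N}$, property (4) of Definition~1.1 gives
\[
0_{\mathcal A}\preceq d(x_{n+p},x_n)\preceq\sum_{k=n}^{n+p-1}d(x_{k+1},x_k).
\]
Here I would invoke the standard $C^*$-algebra fact that $0_{\mathcal A}\preceq u\preceq v$ implies $\|u\|\leqslant\|v\|$, to conclude
\[
\|d(x_{n+p},x_n)\|\leqslant\sum_{k=n}^{n+p-1}\|d(x_{k+1},x_k)\|\leqslant\|d(x_1,x_0)\|\sum_{k=n}^{n+p-1}\lambda^{k},
\]
the last sum being a tail of a convergent geometric series since $\lambda<1$; hence $\|d(x_{n+p},x_n)\|\to0$ as $n\to\infty$, uniformly in $p$. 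By Definition~1.2 the sequence is Cauchy, and completeness yields an $x\in X$ with $\lim_{n\to\infty}x_n=x$.

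Finally, I would recover the common fixed point and its uniqueness exactly as before, but at the level of norms. From $0_{\mathcal A}\preceq d(x,Sx)\preceq d(x,x_{2n+1})+d(x_{2n+1},Sx)$ and monotonicity of the norm, $\|d(x,Sx)\|\leqslant\|d(x,x_{2n+1})\|+\|d(Tx_{2n},Sx)\|\leqslant\|d(x,x_{2n+1})\|+\lambda\|d(x_{2n},x)\|\to0$, so $Sx=x$; then $\|d(Tx,x)\|=\|d(Tx,Sx)\|\leqslant\lambda\|d(x,x)\|=0$ forces $Tx=x$. For uniqueness, any common fixed point $y$ satisfies $\|d(x,y)\|=\|d(Tx,Sy)\|\leqslant\lambda\|d(x,y)\|$ with $\lambda<1$, whence $\|d(x,y)\|=0$ and $y=x$. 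I anticipate no genuine obstacle here; the only place where the $C^*$-structure is actually used (rather than merely a normed space) is the passage from the vector triangle inequality to the scalar estimate, which relies on the monotonicity $0_{\mathcal A}\preceq u\preceq v\Rightarrow\|u\|\leqslant\|v\|$ of the norm on positive elements.
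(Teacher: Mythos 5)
Your proposal is correct and follows essentially the same route the paper intends: the paper gives no separate proof of this corollary, remarking only that it follows by checking the proof of Theorem~\ref{Th1}, and your norm-level translation of that argument --- same alternating Picard sequence, same geometric-series Cauchy estimate, with order inequalities replaced by scalar ones via the standard fact that $0_{\mathcal A}\preceq u\preceq v$ implies $\|u\|\leqslant\|v\|$ --- is precisely that check carried out. If anything, your Cauchy step is slightly cleaner than the corresponding step in Theorem~\ref{Th1} (which decomposes $d(x_1,x_0)=b^2$ and estimates through $|ba^k|^2$), but the two arguments are the same in substance.
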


\begin{corollary}
Let $(X,\mathcal{A},d)$ be a complete $C^*$-algebra-valued metric space.
Suppose that the mapping $T\colon X\rightarrow X$ satisfies
\begin{equation*}
d(T^mx,T^ny)\preceq a^*d(x,y)a, \ \mbox{for\ any}\ x,y\in X,
\end{equation*}
where $a\in \mathcal{A}$ with $\|a\|<1$, and $m$ and $n$ are fixed positive integers. Then $T$ has a unique fixed point in $X$.
\end{corollary}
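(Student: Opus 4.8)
The plan is to deduce this corollary directly from Theorem~\ref{Th1} by applying it to the pair of mappings $T^m$ and $T^n$. Indeed, the hypothesis $d(T^mx,T^ny)\preceq a^*d(x,y)a$ for all $x,y\in X$ is precisely condition \dref{2-1} with $T$ replaced by $T^m$ and $S$ replaced by $T^n$, and $\|a\|<1$ holds by assumption. Hence Theorem~\ref{Th1} guarantees that $T^m$ and $T^n$ possess a unique common fixed point $x\in X$; that is, $T^mx=T^nx=x$, and $x$ is the \emph{only} point of $X$ with this property.

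It then remains to upgrade $x$ from a common fixed point of $T^m$ and $T^n$ to a genuine fixed point of $T$. The key observation is that powers of a single map commute with $T$. First I would consider the point $Tx$ and compute
\begin{equation*}
T^m(Tx)=T(T^mx)=Tx,\qquad T^n(Tx)=T(T^nx)=Tx,
\end{equation*}
so that $Tx$ is itself a common fixed point of $T^m$ and $T^n$. Since such a common fixed point is unique, we are forced to conclude $Tx=x$, i.e.\ $x$ is a fixed point of $T$.

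For uniqueness among the fixed points of $T$, suppose $y\in X$ satisfies $Ty=y$. Then $T^my=y$ and $T^ny=y$, so $y$ is a common fixed point of $T^m$ and $T^n$; the uniqueness clause of Theorem~\ref{Th1} forces $y=x$. Thus $T$ has a unique fixed point in $X$.

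The only step requiring any thought is the passage from a common fixed point of the iterates to a fixed point of $T$ itself; everything else is a direct invocation of Theorem~\ref{Th1}. I expect no analytic obstacle, since the contraction estimate, the Cauchy argument, and completeness are already absorbed into Theorem~\ref{Th1}; once that theorem is in hand, the remaining reasoning is purely algebraic, relying only on $T^{k}T=T\,T^{k}$.
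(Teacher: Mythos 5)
Your proposal is correct and follows the same route as the paper: apply Theorem~\ref{Th1} to the pair $T^m$, $T^n$ and then transfer the conclusion back to $T$. In fact your write-up is more complete than the paper's, whose entire proof reads ``by setting $T=T^m$ and $S=T^n$ in \dref{2-1}, the result then follows from Theorem~2.1''---but that invocation by itself yields only a unique common fixed point of $T^m$ and $T^n$, and the paper silently omits the passage from this to a fixed point of $T$. Your commutation argument ($T^m(Tx)=T(T^mx)=Tx$ and likewise for $T^n$, so $Tx$ is again a common fixed point of the iterates and uniqueness forces $Tx=x$), together with the observation that any fixed point of $T$ is automatically a common fixed point of $T^m$ and $T^n$, is precisely the bridge needed to make the paper's one-line deduction rigorous, and both halves of your argument are sound.
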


\begin{proof}
By setting $T=T^m$ and $S=T^n$ in (\ref{2-1}),
the result then follows from Theorem 2.1.
\end{proof}

\begin{remark}
In Theorem \ref{Th1}, if $S=T$, (\ref{2-1}) becomes
\begin{equation}\label{2-2}
d(Tx,Ty)\preceq a^*d(x,y)a, \ \mbox{for\ any}\ x,y\in X,
\end{equation}
where $a\in \mathcal{A}$ with $\|a\|<1$.
In this case, we have the following corollary, which can also be found in \cite{Ma}.
\end{remark}

\begin{corollary}
Let $(X,\mathcal{A},d)$ be a complete $C^*$-algebra-valued metric space.
Suppose that the mapping $T\colon X\rightarrow X$ satisfies
(\ref{2-2}),
then $T$ has a unique fixed point in $X$.
\end{corollary}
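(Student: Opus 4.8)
The plan is to recognize Corollary 2.4 as the diagonal case $S=T$ of Theorem~\ref{Th1}, so that essentially no new work is required. First I would observe that substituting $S=T$ into the contractive hypothesis \dref{2-1} yields exactly \dref{2-2}; hence the assumptions of Theorem~\ref{Th1} are met by the single mapping $T$ together with the auxiliary choice $S:=T$, using the same element $a\in\mathcal A$ with $\|a\|<1$.

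Next I would unwind what a common fixed point means in this degenerate setting. A point $x\in X$ is a common fixed point of $T$ and $S$ precisely when $x=Tx=Sx$; taking $S=T$ this collapses to the single requirement $x=Tx$, i.e.\ $x$ is a fixed point of $T$. Likewise, the uniqueness of a common fixed point asserted by Theorem~\ref{Th1} translates directly into uniqueness of a fixed point of $T$. Thus the existence-and-uniqueness conclusion of Theorem~\ref{Th1} delivers the corollary immediately.

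I do not expect any genuine obstacle, since the argument is a pure specialization and the only plausible difficulty—the Cauchy estimate—has already been carried out inside the proof of Theorem~\ref{Th1}. Were one to prefer a self-contained derivation, the single delicate point would be that bound: one forms the Picard iterates $x_{n+1}=Tx_n$, telescopes via \dref{2-2} to get $d(x_{n+1},x_n)\preceq (a^*)^n d(x_1,x_0)\,a^n$, writes $d(x_1,x_0)=b^2$ with $b\in\mathcal A_+$, and controls the tail by $\sum_{k=n}^{n+p-1}\||ba^k|^2\|\,1_{\mathcal A}\preceq \|b\|^2 1_{\mathcal A}\sum_{k\geqslant n}\|a^k\|^2$, which tends to $0_{\mathcal A}$ because $\|a\|<1$ forces $\sum_k\|a^k\|^2<\infty$. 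Completeness then supplies a limit $x$, the triangle inequality together with \dref{2-2} and Lemma~\ref{L1}(1) gives $Tx=x$, and uniqueness follows from $\|d(x,y)\|\leqslant\|a\|^2\|d(x,y)\|$. The clean route, however, is simply to invoke Theorem~\ref{Th1}.
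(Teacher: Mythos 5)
Your proposal is correct and matches the paper exactly: the paper (via Remark 2.1) obtains this corollary by setting $S=T$ in Theorem~\ref{Th1}, observing that \dref{2-1} then reduces to \dref{2-2} and that a common fixed point of $T$ with itself is precisely a fixed point of $T$. Your optional self-contained sketch also faithfully reproduces the Cauchy estimate and uniqueness argument from the proof of Theorem~\ref{Th1}, so nothing is missing.
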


\begin{theorem}
Let $(X,\mathcal{A},d)$ be a complete $C^*$-algebra-valued metric space.
Suppose that two mappings $T,S\colon X\rightarrow X$ satisfy
\begin{equation}\label{2-3}
d(Tx,Ty)\preceq a^*d(Sx,Sy)a, \ \mbox{for\ any}\ x,y\in X,
\end{equation}
where $a\in \mathcal{A}$ with $\|a\|<1$. If $R(T)$ is contained in $R(S)$ and $R(S)$ is complete in $X$,
then $T$ and $S$ have a unique point of coincidence in $X$. Furthermore,
if $T$ and $S$ are weakly compatible,
$T$ and $S$ have a unique
common fixed point in $X$.
\end{theorem}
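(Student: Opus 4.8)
The plan is to run Jungck's classical iteration, borrowing the contraction estimate from the proof of Theorem \ref{Th1}. Since $R(T)\subseteq R(S)$, I can start from any $x_0\in X$ and choose $x_{n+1}$ recursively so that $Sx_{n+1}=Tx_n$; writing $y_n=Tx_n=Sx_{n+1}$ gives a single sequence lying in $R(S)$ to work with. Applying (\ref{2-3}) to the pair $(x_n,x_{n+1})$ yields $d(y_n,y_{n+1})=d(Tx_n,Tx_{n+1})\preceq a^*d(Sx_n,Sx_{n+1})a=a^*d(y_{n-1},y_n)a$, and iterating this (using $b\preceq c\Rightarrow a^*ba\preceq a^*ca$, exactly as in Theorem \ref{Th1}) gives $d(y_n,y_{n+1})\preceq (a^*)^n d(y_0,y_1)a^n$.

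From here the Cauchy estimate is identical in form to the one in Theorem \ref{Th1}: writing $d(y_0,y_1)=b^2$ with $b\in\mathcal A_+$ and telescoping along the triangle inequality, I bound $d(y_{n+p},y_n)\preceq \|b\|^2 1_{\mathcal A}\sum_{k=n}^{n+p-1}\|a^k\|^2$, whose norm tends to $0$ because $\|a\|<1$. Thus $\{y_n\}$ is Cauchy. The one genuinely new point is that I need the limit of $\{y_n\}$ to lie in $R(S)$, so that it can be written as $Su$ for some $u\in X$. This is precisely where completeness is invoked on $R(S)$ rather than on $X$: since $\{y_n\}\subseteq R(S)$ and $R(S)$ is complete in $X$, the Cauchy sequence converges inside $R(S)$, so there is $q\in R(S)$ with $y_n\to q$, and I fix $u\in X$ with $Su=q$.

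Next I would verify that $q$ is a point of coincidence, i.e. $Tu=Su=q$. Applying (\ref{2-3}) to $(u,x_n)$ gives $d(Tu,y_n)=d(Tu,Tx_n)\preceq a^*d(Su,Sx_n)a=a^*d(q,y_{n-1})a$, and since $y_{n-1}\to q$ this right-hand side tends to $0_{\mathcal A}$ by Lemma \ref{L1}(1). Combining with $d(Tu,q)\preceq d(Tu,y_n)+d(y_n,q)$ and letting $n\to\infty$ forces $d(Tu,q)=0_{\mathcal A}$, so $Tu=q=Su$. Uniqueness of the point of coincidence then follows from the norm argument already used in Theorem \ref{Th1}: if $Tv=Sv=q'$ as well, then $d(q,q')=d(Tu,Tv)\preceq a^*d(Su,Sv)a=a^*d(q,q')a$, whence $\|d(q,q')\|\leqslant\|a\|^2\|d(q,q')\|$ and $q=q'$ because $\|a\|<1$.

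Finally, under weak compatibility $T$ and $S$ have a unique point of coincidence, so Lemma \ref{L2} immediately upgrades this to a unique common fixed point, completing the proof. I expect the only conceptually nontrivial step, and hence the main obstacle, to be the relocation of the limit point into $R(S)$: one must notice that the completeness hypothesis has to be placed on the range of $S$ so that $q$ can be realized as $Su$, after which the rest is the bookkeeping of the iteration together with the earlier lemmas.
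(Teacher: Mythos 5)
Your proposal is correct and follows essentially the same route as the paper: the Jungck iteration $Sx_{n+1}=Tx_n$, the iterated estimate $d(y_n,y_{n+1})\preceq (a^*)^n d(y_0,y_1)a^n$ with the Cauchy argument borrowed from Theorem \ref{Th1}, completeness of $R(S)$ to realize the limit as $Sq$, and Lemma \ref{L2} for the final upgrade. The only cosmetic difference is that you conclude $Tu=Su$ via the triangle inequality $d(Tu,q)\preceq d(Tu,y_n)+d(y_n,q)$, whereas the paper shows $Sx_n\to Tq$ and invokes uniqueness of limits (Lemma \ref{L1}(3)); these are interchangeable.
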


\begin{proof}
Let $x_0\in X$. Choose $x_1\in X$ such that $Sx_1=Tx_0$, which can be done since $R(T)\subseteq R(S)$. Let $x_2\in X$ such that $Sx_2=Tx_1$.
Repeating the process, we get a sequence $\{x_n\}_{n=1}^\infty$ in $X$ satisfying $Sx_{n}=Tx_{n-1}$. Then from (\ref{2-3}),
\begin{eqnarray*}
    \begin{array}{rcl}
d(Sx_{n+1},Sx_{n})&=&d(Tx_{n},Tx_{n-1})\\[5pt]
      &\preceq&a^*d(Sx_{n},Sx_{n-1})a\\[5pt]
      &&\cdots\\[5pt]
      &\preceq&(a^*)^{n}d(Sx_1,Sx_0)a^{n},
 \end{array}
\end{eqnarray*}
which shows that $\{Sx_n\}_{n=1}^\infty$ is a Cauchy sequence in $R(S)$. Since $R(S)$ is complete in $X$, there exists $q\in X$ such that $\lim\limits_{n\rightarrow\infty}Sx_n=Sq$.
\begin{eqnarray*}
d(Sx_{n},Tq)=d(Tx_{n-1},Tq)\preceq a^*d(Sx_{n-1},Sq)a,
\end{eqnarray*}
From $\lim\limits_{n\rightarrow\infty}Sx_n=Sq$ and Lemma \ref{L1} (1), we get $a^*d(Sx_{n-1},Sq)a\rightarrow 0_{\mathcal A}$ as $n\rightarrow \infty$, and then $\lim\limits_{n\rightarrow \infty}Sx_{n}=Tq$.
It follows from Lemma \ref{L1} (3) that
$Tq=Sq$.
If there is a point $w$ in $X$ such that $Tw=Sw$, (\ref{2-3}) shows
\begin{eqnarray*}
d(Sq,Sw)=d(Tq,Tw)\preceq a^*d(Sq,Sw)a.
\end{eqnarray*}
The same reasoning that in Theorem \ref{Th1} tells us that $Sq=Sw$.
Hence, $T$ and $S$ have a unique point of coincidence in $X$. It follows from Lemma \ref{L2} that $T$ and $S$ have a unique common fixed point in $X$.
\end{proof}

\begin{example}
In Theorem 2.2, the condition that ``$R(S)$ is complete in $X$" is essential. For example,
Let $X=\Bbb{R}$ and $\mathcal{A}=M_2(\Bbb{C})$. Define $d\colon X\times X\rightarrow \mathcal{A}$ by
$d(x,y)=\left[
          \begin{array}{cc}
            |x-y| & 0 \\
            0 & k|x-y| \\
          \end{array}
        \right]$,
where $k>0$ is a constant.
Then $(X,\mathcal{A},d)$ is a complete $C^*$-algebra-valued metric space.
Define two mappings $T$ and $S$ by the following way
\begin{eqnarray*}
Tx=\left\{ \begin{array}{cc}
      \frac{k}{2}x & x\neq 0, \\[5pt]
      1 & x=0,
    \end{array}
    \right.
\ \mbox{and}\ \
Sx=\left\{ \begin{array}{cc}
      kx & x\neq 0, \\[5pt]
      2 & x=0.
    \end{array}
    \right.
    \end{eqnarray*}
One can verify that
\begin{eqnarray*}
d(Tx,Ty)\preceq a^*d(Sx,Sy)a,
\end{eqnarray*}
where $a=\left[
      \begin{array}{cc}
            \frac{\sqrt{2}}{2} & 0 \\
            0 &  \frac{\sqrt{2}}{2} \\
          \end{array}
        \right] \in\mathcal A$ and $\|a\|=\frac{\sqrt{2}}{2}\in(0,1)$.
And, $R(T)\subseteq R(S)$. But $R(S)$ is not complete in $X$. We can compute that $T$ and $S$ do not have a point of coincidence in $X$.
\end{example}

\begin{theorem}
Let $(X,\mathcal{A},d)$ be a complete $C^*$-algebra-valued metric space.
Suppose that two mappings $T,S\colon X\rightarrow X$ satisfy
\begin{equation}\label{2-4}
d(Tx,Ty)\preceq ad(Tx,Sx)+ad(Ty,Sy), \ \mbox{for\ any}\ x,y\in X,
\end{equation}
where $a\in \mathcal{A}_{+}'$ with $\|a\|<\frac{1}{2}$. If $R(T)$ is contained in $R(S)$ and $R(S)$ is complete in $X$,
then $T$ and $S$ have a unique point of coincidence in $X$. Furthermore,
if $T$ and $S$ are weakly compatible,
$T$ and $S$ have a unique
common fixed point in $X$.
\end{theorem}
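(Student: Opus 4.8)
The plan is to run the Picard-type scheme from the proof of Theorem 2.2, adapted to the Kannan-type inequality (\ref{2-4}). Fix $x_0\in X$; since $R(T)\subseteq R(S)$ I can choose inductively $x_n\in X$ with $Sx_n=Tx_{n-1}$ for every $n\geq 1$. The goal is to show $\{Sx_n\}$ is Cauchy in $R(S)$, extract a limit of the form $Sq$ using completeness of $R(S)$, prove $Tq=Sq$, establish uniqueness of the point of coincidence, and finally invoke Lemma \ref{L2} under weak compatibility.

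First I would put $d_n=d(Sx_{n+1},Sx_n)=d(Tx_n,Tx_{n-1})$ and apply (\ref{2-4}) with $x=x_n$, $y=x_{n-1}$. Because $Sx_n=Tx_{n-1}$, the right-hand side collapses to $d(Tx_n,Sx_n)=d_n$ and $d(Tx_{n-1},Sx_{n-1})=d_{n-1}$, giving $d_n\preceq a d_n+a d_{n-1}$, equivalently $(1_{\mathcal A}-a)d_n\preceq a d_{n-1}$. This is the crux. Since $a\in\mathcal{A}_{+}'$ with $\|a\|<\frac{1}{2}$, the element $1_{\mathcal A}-a$ is positive and invertible and its inverse again lies in $\mathcal{A}_{+}'$; multiplying the last inequality by $(1_{\mathcal A}-a)^{-1}$ and using Lemma \ref{L1}(2) yields $d_n\preceq h\,d_{n-1}$ with $h=(1_{\mathcal A}-a)^{-1}a\in\mathcal{A}_{+}'$ and $\|h\|\leq\|a\|\,\|(1_{\mathcal A}-a)^{-1}\|=\|a\|/(1-\|a\|)<1$. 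Iterating gives $d_n\preceq h^n d_0$, and then the triangle inequality combined with $0\preceq u\preceq v\Rightarrow\|u\|\leq\|v\|$ and submultiplicativity produces $\|d(Sx_{n+p},Sx_n)\|\leq\|d_0\|\sum_{k\geq n}\|h\|^k\to 0$, so $\{Sx_n\}$ is Cauchy. (Alternatively one may take norms directly in $d_n\preceq a d_n+a d_{n-1}$ to obtain $\|d_n\|\leq\frac{\|a\|}{1-\|a\|}\|d_{n-1}\|$ and bypass the inversion entirely.) I expect the manipulation of $(1_{\mathcal A}-a)^{-1}$ and the check $\|h\|<1$—which is exactly where the hypothesis $\|a\|<\frac{1}{2}$ is consumed—to be the main obstacle.

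By completeness of $R(S)$ there is $q\in X$ with $Sx_n\to Sq$. To see $Tq=Sq$, I would combine the triangle inequality $d(Tq,Sq)\preceq d(Tq,Tx_n)+d(Sx_{n+1},Sq)$ with (\ref{2-4}) applied to $(q,x_n)$, obtaining $d(Tq,Sq)\preceq a\,d(Tq,Sq)+a\,d_n+d(Sx_{n+1},Sq)$; passing to norms and letting $n\to\infty$ (so that $\|d_n\|\to 0$ and $\|d(Sx_{n+1},Sq)\|\to 0$) leaves $\|d(Tq,Sq)\|\leq\|a\|\,\|d(Tq,Sq)\|$, which forces $Tq=Sq$ since $\|a\|<1$.

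Uniqueness of the point of coincidence is then immediate and even cleaner than in the Banach-type case: if $Tw=Sw$ for some $w\in X$, then (\ref{2-4}) applied to $(q,w)$ gives $d(Tq,Tw)\preceq a\,d(Tq,Sq)+a\,d(Tw,Sw)=0_{\mathcal A}$, hence $Sq=Tq=Tw=Sw$. Thus $T$ and $S$ possess a unique point of coincidence, and when they are weakly compatible Lemma \ref{L2} yields the unique common fixed point.
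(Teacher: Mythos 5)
Your proposal is correct and follows the paper's own route almost step for step: the same iterative scheme $Sx_n=Tx_{n-1}$, the same reduction $(1_{\mathcal A}-a)d_n\preceq a\,d_{n-1}$ followed by multiplication with $(1_{\mathcal A}-a)^{-1}\in\mathcal{A}_+'$ via Lemma \ref{L1}(2) to get $d_n\preceq b\,d_{n-1}$ with $b=(1_{\mathcal A}-a)^{-1}a$ and $\|b\|\leq\|a\|/(1-\|a\|)<1$, the same Cauchy estimate, and the identical one-line uniqueness argument and appeal to Lemma \ref{L2}. The one place where you genuinely diverge is the step $Tq=Sq$: the paper reuses the inequality \dref{P1}, claiming $d(Sx_n,Tq)=d(Tx_{n-1},Tq)\preceq b\,d(Sx_{n-1},Sq)$, but \dref{P1} was derived only for consecutive members of the constructed sequence, and a fresh application of \dref{2-4} to the pair $(x_{n-1},q)$ yields $a\,d(Tx_{n-1},Sx_{n-1})+a\,d(Tq,Sq)$, not the stated bound; so the paper's step, as written, does not follow. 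Your triangle-inequality argument---$d(Tq,Sq)\preceq a\,d(Tq,Sq)+a\,d_n+d(Sx_{n+1},Sq)$, then norms and $\|a\|<1$---avoids this misapplication and is the standard correct treatment of the Kannan-type condition, where one should not expect $d(Tx,Ty)$ to be controlled by $d(Sx,Sy)$ at all. So your version is not merely equivalent but actually repairs a gap in the published proof; the parenthetical alternative you mention (taking norms directly in $d_n\preceq a\,d_n+a\,d_{n-1}$) would likewise bypass the operator inversion, at the cost of losing the order-theoretic estimate $d_n\preceq b^n d_0$, which is never needed beyond its norm anyway.
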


\begin{proof}
Similar to Theorem 2.2, construct a sequence $\{x_n\}_{n=1}^\infty$ in $X$ such that $Sx_{n}=Tx_{n-1}$. Then from (\ref{2-4}),
\begin{eqnarray*}
    \begin{array}{rcl}
d(Sx_{n+1},Sx_{n})&=&d(Tx_{n},Tx_{n-1})\\[5pt]
      &\preceq&ad(Tx_{n},Sx_{n})+ad(Tx_{n-1},Sx_{n-1})\\[5pt]
      &=&ad(Sx_{n+1},Sx_n)+ad(Sx_{n},Sx_{n-1}),
 \end{array}
\end{eqnarray*}
which implies that
\begin{equation*}
(1-a)d(Sx_{n+1},Sx_{n})\preceq ad(Sx_{n},Sx_{n-1}).
\end{equation*}
Since $\|a\|<\frac{1}{2}$, then $1-a$ is invertible, and can be expressed as $(1-a)^{-1}=\sum\limits_{n=0}^\infty a^n$, which together with
$a\in \mathcal{A}_{+}'$ can yields $(1-a)^{-1}\in \mathcal{A}_{+}'$. By Lemma \ref{L1} (2), we know
\begin{equation}\label{P1}
d(Sx_{n+1},Sx_{n})\preceq bd(Sx_{n},Sx_{n-1}),
\end{equation}
where $b=(1-a)^{-1}a\in \mathcal{A}_{+}'$ with $\|b\|<1$.
Now, by induction and Lemma \ref{L1} (2), we can get
\begin{equation*}
d(Sx_{n+1},Sx_{n})\preceq b^nd(Sx_{1},Sx_{0}).
\end{equation*}
For $n>m$,
\begin{eqnarray*}
    \begin{array}{rcl}
d(Sx_{n},Sx_{m})
      &\preceq&d(Sx_{n},Sx_{n-1})+d(Sx_{n-1},Sx_{n-2})
      +\cdots d(Sx_{m+1},Sx_{m})\\[5pt]
      &\preceq&(b^{n-1}+b^{n-2}+\cdots+b^{m})d(Sx_{1},Sx_{0})\\[5pt]
      &\preceq& \|b^{n-1}+b^{n-2}+\cdots+b^{m}\|\|d(Sx_{1},Sx_{0})\|1_{\mathcal A}\\[5pt]
      &\preceq& \|b\|^{n-1}+\|b\|^{n-2}+\cdots+\|b\|^{m}\|d(Sx_{1},Sx_{0})\|1_{\mathcal A}\\[5pt]
      &=&\frac{\|b\|^{m}}{1-\|b\|}\|d(Sx_{1},Sx_{0})\|1_{\mathcal A}.
 \end{array}
\end{eqnarray*}
Hence $\{Sx_n\}_{n=0}^\infty$ is a Cauchy sequence in $R(S)$. The completion of $R(S)$ implies there is $q\in X$ such that $\lim\limits_{n\rightarrow\infty}Sx_n=Sq$.

Again, by (\ref{P1}), we have
\begin{equation*}
d(Sx_{n},Tq)=d(Tx_{n-1},Tq)\preceq bd(Sx_{n-1},Sq),
\end{equation*}
which implies that $\lim\limits_{n\rightarrow\infty}Sx_n=Tq$. The uniqueness of a limit in $C^*$-algebra-valued metric spaces tells us that $Tq=Sq$ (Lemma \ref{L1} (3)). Hence $T$ and $S$ have a point of coincidence in $X$. In the following we will show the uniqueness of points of coincidence.
To do this, we assume that there is $p\in X$ such that $Tp=Sp$.
Using (\ref{P1}), we obtain
\begin{equation*}
d(Sp,Sq)=d(Tp,Tq)\preceq ad(Tp,Sp)+ad(Tq,Sq),
\end{equation*}
which shows that
$\|d(Sp,Sq)\|=0$, and then $Sp=Sq$. It follows from Lemma \ref{L2} that $T$ and $S$ have a unique common fixed point in $X$.
\end{proof}

\begin{theorem}
Let $(X,\mathcal{A},d)$ be a complete $C^*$-algebra-valued metric space.
Suppose that two mappings $T,S\colon X\rightarrow X$ satisfy
\begin{equation}\label{2-5}
d(Tx,Ty)\preceq ad(Tx,Sy)+ad(Sx,Ty), \ \mbox{for\ any}\ x,y\in X,
\end{equation}
where $a\in \mathcal{A}_{+}'$ with $\|a\|<\frac{1}{2}$. If $R(T)$ is contained in $R(S)$ and $R(S)$ is complete in $X$,
then $T$ and $S$ have a unique point of coincidence in $X$. Furthermore,
if $T$ and $S$ are weakly compatible,
$T$ and $S$ have a unique
common fixed point in $X$.
\end{theorem}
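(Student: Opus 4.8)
The plan is to follow the template of the proof of Theorem 2.3, concentrating the new work on exploiting the cross-term structure of \dref{2-5}. First I would fix $x_0\in X$ and, using $R(T)\subseteq R(S)$, build the sequence $\{x_n\}$ with $Sx_n=Tx_{n-1}$, exactly as before. The key one-step estimate comes from setting $x=x_n$, $y=x_{n-1}$ in \dref{2-5}:
\[
d(Sx_{n+1},Sx_n)=d(Tx_n,Tx_{n-1})\preceq a\,d(Tx_n,Sx_{n-1})+a\,d(Sx_n,Tx_{n-1}).
\]
Here the second term vanishes because $Sx_n=Tx_{n-1}$, while the first equals $a\,d(Sx_{n+1},Sx_{n-1})$ since $Tx_n=Sx_{n+1}$. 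Applying the triangle inequality together with Lemma \ref{L1} (2) (legitimate because $a\in\mathcal A_+'$) and absorbing the $d(Sx_{n+1},Sx_n)$ term to the left, I obtain $(1-a)d(Sx_{n+1},Sx_n)\preceq a\,d(Sx_n,Sx_{n-1})$. Since $\|a\|<\frac12<1$, $1-a$ is invertible with $(1-a)^{-1}=\sum_{k\ge 0}a^k\in\mathcal A_+'$, so a further application of Lemma \ref{L1} (2) yields $d(Sx_{n+1},Sx_n)\preceq b\,d(Sx_n,Sx_{n-1})$ with $b=(1-a)^{-1}a\in\mathcal A_+'$ and $\|b\|\le \|a\|/(1-\|a\|)<1$. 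This is precisely the inequality \dref{P1} from Theorem 2.3, so the Cauchy estimate for $\{Sx_n\}$ and the existence of $q\in X$ with $\lim_n Sx_n=Sq$ carry over verbatim.

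The step I expect to be the genuine obstacle is showing $Tq=Sq$. Unlike Theorem 2.3, feeding $q$ and $x_n$ into \dref{2-5} does not produce a clean one-step contraction toward $Sq$; instead, with $x=q$, $y=x_n$ and $Tx_n=Sx_{n+1}$ one gets
\[
d(Tq,Sx_{n+1})\preceq a\,d(Tq,Sx_n)+a\,d(Sq,Sx_{n+1}),
\]
in which the quantity $d(Tq,Sx_n)$ reappears on the right. Writing $c_n=\|d(Tq,Sx_n)\|$ and $\varepsilon_n=\|d(Sq,Sx_n)\|$, and using that $0\preceq u\preceq v$ forces $\|u\|\le\|v\|$ together with $\|a\,u\|\le\|a\|\,\|u\|$, this gives the scalar recursion $c_{n+1}\le \|a\|c_n+\|a\|\varepsilon_{n+1}$ with $\varepsilon_n\to 0$. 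Since $\|a\|<1$ and $\{c_n\}$ is bounded (by the triangle inequality $c_n\le \varepsilon_n+\|d(Sq,Tq)\|$), passing to the $\limsup$ forces $\limsup_n c_n\le \|a\|\limsup_n c_n$, whence $c_n\to 0$. Thus $\lim_n Sx_n=Tq$, and uniqueness of limits (Lemma \ref{L1} (3)) gives $Tq=Sq$, so $T$ and $S$ have a point of coincidence.

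Finally, for uniqueness of the point of coincidence I would take any $p$ with $Tp=Sp$ and substitute $x=p$, $y=q$ in \dref{2-5}. Because $Tp=Sp$ and $Tq=Sq$, both terms on the right collapse to $a\,d(Sp,Sq)$, giving $d(Sp,Sq)\preceq 2a\,d(Sp,Sq)$; taking norms and using $2\|a\|<1$ forces $\|d(Sp,Sq)\|=0$, i.e. $Sp=Sq$. Hence the point of coincidence is unique, and if $T$ and $S$ are weakly compatible, Lemma \ref{L2} upgrades it to a unique common fixed point. The only substantive departure from Theorem 2.3 is the $\limsup$ argument needed to establish $Tq=Sq$; every other step is a direct transcription.
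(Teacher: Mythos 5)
Your proof is correct, and up to the coincidence step it is a faithful transcription of the paper's own argument: the same sequence $Sx_n=Tx_{n-1}$, the same observation that the term $a\,d(Sx_n,Tx_{n-1})$ vanishes while $a\,d(Tx_n,Sx_{n-1})=a\,d(Sx_{n+1},Sx_{n-1})$ is absorbed via the triangle inequality and Lemma \ref{L1} (2) to yield $d(Sx_{n+1},Sx_n)\preceq b\,d(Sx_n,Sx_{n-1})$ with $b=(1-a)^{-1}a\in\mathcal{A}_+'$, the same geometric Cauchy estimate, and the identical uniqueness computation $d(Sp,Sq)\preceq 2a\,d(Sp,Sq)$ with $\|2a\|<1$. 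Where you genuinely depart from the paper is at the step $Tq=Sq$: the paper disposes of it with the single sentence ``the same argument in Theorem 2.3,'' but, as you correctly diagnose, that argument does not transcribe, since substituting $q$ into \dref{2-5} produces the self-referential inequality $d(Tq,Sx_{n+1})\preceq a\,d(Tq,Sx_n)+a\,d(Sq,Sx_{n+1})$ rather than a one-sided bound of the form $b\,d(Sx_{n-1},Sq)$ claimed in Theorem 2.3. Your scalar $\limsup$ recursion ($c_{n+1}\le\|a\|c_n+\|a\|\varepsilon_{n+1}$ with $\{c_n\}$ bounded, forcing $c_n\to 0$) validly closes this gap; the ingredients you invoke are sound, since $0_{\mathcal A}\preceq u\preceq v$ does imply $\|u\|\leqslant\|v\|$, and $a\,d(\cdot,\cdot)$ is positive because $a$ is central and positive. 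An alternative that stays entirely within the paper's order-theoretic toolkit is to absorb once more: from $d(Tq,Sx_n)\preceq d(Tq,Sx_{n+1})+d(Sx_{n+1},Sx_n)$ and Lemma \ref{L1} (2) one obtains $(1-a)\,d(Tq,Sx_{n+1})\preceq a\,d(Sq,Sx_{n+1})+a\,d(Sx_{n+1},Sx_n)$, hence $d(Tq,Sx_{n+1})\preceq b\bigl[d(Sq,Sx_{n+1})+d(Sx_{n+1},Sx_n)\bigr]\rightarrow 0_{\mathcal A}$ in norm, giving $\lim\limits_{n\rightarrow\infty}Sx_n=Tq$ directly. Either way, your write-up is more complete than the paper's at precisely the point the paper glosses over; everything else coincides.
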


\begin{proof}
Similar to Theorem 2.2, construct a sequence $\{x_n\}_{n=1}^\infty$ in $X$ such that $Sx_{n}=Tx_{n-1}$. Then from (\ref{2-5}),
\begin{eqnarray*}
    \begin{array}{rcl}
d(Sx_{n+1},Sx_{n})&=&d(Tx_{n},Tx_{n-1})\\[5pt]
      &\preceq&ad(Tx_{n},Sx_{n-1})+ad(Sx_{n},Tx_{n-1})\\[5pt]
      &=&ad(Sx_{n+1},Sx_{n-1})+ad(Sx_{n},Sx_{n})\\[5pt]
      &\preceq&ad(Sx_{n+1},Sx_{n})+ad(Sx_{n},Sx_{n-1}),
 \end{array}
\end{eqnarray*}
which implies that
\begin{equation*}
d(Sx_{n+1},Sx_{n})\preceq bd(Sx_{n},Sx_{n-1}),
\end{equation*}
where $b=(1-a)^{-1}a\in \mathcal{A}_{+}'$ with $\|b\|<1$.
The same argument in Theorem 2.3, we know $T$ and $S$ have a point of coincidence $Tq$ in $X$. In the following we will show the uniqueness of points of coincidence.
To do this, we assume that there is $p\in X$ such that $Tp=Sp$.
Using (\ref{2-5}), we obtain
\begin{equation*}
d(Sp,Sq)=d(Tp,Tq)\preceq ad(Tp,Sq)+ad(Sp,Tq)=2ad(Sp,Sq),
\end{equation*}
which together with $\|2a\|<1$ yields that
$\|d(Sp,Sq)\|=0$, and then $Sp=Sq$. It follows from Lemma \ref{L2} that $T$ and $S$ have a unique common fixed point in $X$.
\end{proof}

In Theorem 2.4, we choose $S=\mbox{id}_X$, then $R(S)=X$, and $T$ is weakly compatible with $S$. Moreover, we have the following consequence, which can also be seen in \cite{Ma}.

\begin{corollary}
Let $(X,\mathcal{A},d)$ be a complete $C^*$-algebra-valued metric space.
Suppose that the mapping $T\colon X\rightarrow X$ satisfies
\begin{equation*}
d(Tx,Ty)\preceq ad(Tx,y)+ad(Ty,x), \ \mbox{for\ any}\ x,y\in X,
\end{equation*}
where $a\in \mathcal{A}_{+}'$ with $\|a\|<\frac{1}{2}$,
then $T$ have a unique point in $X$.
\end{corollary}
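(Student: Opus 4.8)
The plan is to obtain this corollary as an immediate specialization of Theorem 2.4, taking $S=\mbox{id}_X$ exactly as the preceding remark suggests. First I would substitute $Sx=x$ and $Sy=y$ into the contractive hypothesis (\ref{2-5}) of Theorem 2.4, which turns $d(Tx,Ty)\preceq a\,d(Tx,Sy)+a\,d(Sx,Ty)$ into $d(Tx,Ty)\preceq a\,d(Tx,y)+a\,d(x,Ty)$. By the symmetry axiom (3) of a $C^*$-algebra-valued metric one has $d(x,Ty)=d(Ty,x)$, so this is precisely the inequality assumed in the corollary. Hence the single mapping $T$, paired with $S=\mbox{id}_X$, satisfies condition (\ref{2-5}) with the same $a\in\mathcal{A}_+'$ and $\|a\|<\frac{1}{2}$.

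Next I would verify the three structural hypotheses of Theorem 2.4 for this pair. Since $R(S)=R(\mbox{id}_X)=X$, the inclusion $R(T)\subseteq R(S)$ holds trivially, and $R(S)=X$ is complete in $X$ because $(X,\mathcal{A},d)$ is assumed to be a complete $C^*$-algebra-valued metric space. For weak compatibility, observe that the coincidence points of $T$ and $\mbox{id}_X$ are exactly the points $x$ with $Tx=x$; at any such $x$ we have $TSx=Tx=x$ and $STx=Tx=x$, so $TSx=STx$, and the pair $(T,\mbox{id}_X)$ is weakly compatible.

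With all hypotheses checked, Theorem 2.4 delivers a unique common fixed point $x$ of $T$ and $S=\mbox{id}_X$. Such a point satisfies $x=Sx=Tx$, so $x$ is a fixed point of $T$, and its uniqueness as a common fixed point forces it to be the unique fixed point of $T$ (the statement ``unique point'' in the corollary being understood as ``unique fixed point''). I do not anticipate any genuine obstacle: the whole argument is a routine specialization, and the only steps needing a moment's attention are the cosmetic rewriting $d(x,Ty)=d(Ty,x)$ to match the stated form and the (essentially trivial) verification of weak compatibility for the identity map.
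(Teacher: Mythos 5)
Your proposal is correct and matches the paper's own argument exactly: the paper derives this corollary by setting $S=\mathrm{id}_X$ in Theorem 2.4, noting $R(S)=X$ (hence complete, and containing $R(T)$) and that $T$ is trivially weakly compatible with the identity. Your additional details---the symmetry rewriting $d(x,Ty)=d(Ty,x)$ and the explicit check of weak compatibility at fixed points---are exactly the routine verifications the paper leaves implicit.
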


Fixed point theorems for operators in metric spaces are widely investigated and have found various applications in differential and integral equations \cite{Amin,Harja}.
As an application, let us consider the following system of integral equations
\begin{eqnarray}
    \begin{array}{rcl}
x(t)&=&\int_E K_1(t,s,x(s))ds+g(t), \ \ t\in E,\\[5pt]
x(t)&=&\int_E K_2(t,s,x(s))ds+g(t), \ \ t\in E,
 \end{array}
\end{eqnarray}
where $E$ is a Lebesgue measurable set and $m(E)<\infty$.

\begin{theorem}
Assume that the following hypotheses hold

(1)\ $K_1\colon E\times E\times\Bbb{R}\rightarrow\Bbb{R}$,
$K_2\colon E\times E\times\Bbb{R}\rightarrow\Bbb{R}$ are integrable, and $g\in L^\infty(E)$;

(2)\ there exist $k\in (0,1)$ and a continuous function $\varphi\colon E\times E\rightarrow\Bbb{R}^+$ such that
$$|K_1(t,s,u)-K_2(t,s,v)|\leq k\varphi(t,s)|u-v|$$ for $t,s\in E$ and $u,v\in\Bbb{R}$;

(3)\ $\sup\limits_{t\in E}\int_E\varphi(t,s)ds\leq 1$.\\
Then the integral equations (2.7) have a unique common solution in $L^\infty(E)$.
\end{theorem}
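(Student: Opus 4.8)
The plan is to recast the two integral operators as self-maps of a complete $C^*$-algebra-valued metric space and then feed the resulting contraction into the results of this section. Concretely, I would take $X=L^\infty(E)$, let $H=L^2(E)$ and $\mathcal A=B(H)$ be the $C^*$-algebra of bounded linear operators on $H$, and equip $X$ with the metric $d(f,g)=\pi_{|f-g|}$, where $\pi_h\in\mathcal A$ is the multiplication operator $\pi_h(\varphi)=h\varphi$. Since $\|\pi_h\|=\|h\|_\infty$, convergence and Cauchyness for $d$ coincide with those for $\|\cdot\|_\infty$, and because $L^\infty(E)$ is a Banach space, $(X,\mathcal A,d)$ is a complete $C^*$-algebra-valued metric space. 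I would then set
\[
Tx(t)=\int_E K_1(t,s,x(s))\,ds+g(t),\qquad Sx(t)=\int_E K_2(t,s,x(s))\,ds+g(t),
\]
and note that a common fixed point of $T$ and $S$ is precisely a common solution of $(2.7)$, so it suffices to produce a unique common fixed point.

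First I would verify that $T,S$ map $X$ into itself: the integrability of $K_1,K_2$, the assumption $g\in L^\infty(E)$, and $m(E)<\infty$ guarantee that the integrals define essentially bounded functions of $t$. The crux is the contractive estimate. For $x,y\in X$ and a.e.\ $t\in E$, hypothesis (2) gives
\[
|Tx(t)-Sy(t)|\le\int_E|K_1(t,s,x(s))-K_2(t,s,y(s))|\,ds\le k\int_E\varphi(t,s)\,|x(s)-y(s)|\,ds .
\]
Bounding $|x(s)-y(s)|$ by $\|x-y\|_\infty$ and invoking hypothesis (3) to control $\sup_{t}\int_E\varphi(t,s)\,ds\le 1$ yields $\|Tx-Sy\|_\infty\le k\|x-y\|_\infty$, i.e.\ $\|d(Tx,Sy)\|\le k\,\|d(x,y)\|$ with $k\in(0,1)$. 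This is exactly the hypothesis of Corollary 2.1 (take any $a\in\mathcal A$ with $\|a\|=k$), which then delivers a unique common fixed point $x^*\in X$, that is, a unique common solution of the system in $L^\infty(E)$.

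The step I expect to demand the most care is the passage from the scalar estimate to a genuinely $C^*$-algebra-valued contraction. One might hope to establish the stronger operator inequality $d(Tx,Sy)\preceq a^*d(x,y)\,a$ required by Theorem \ref{Th1}, but with the multiplication-operator metric this would force the pointwise bound $|Tx(t)-Sy(t)|\le k\,|x(t)-y(t)|$, which the integral estimate does not provide; only the uniform bound is available. The clean route is therefore through the norm version, Corollary 2.1. Equivalently, one may instead use the scalar metric $d(f,g)=\|f-g\|_\infty 1_{\mathcal A}$, for which the same estimate gives $d(Tx,Sy)\preceq k\,d(x,y)=(\sqrt{k}\,1_{\mathcal A})^*d(x,y)(\sqrt{k}\,1_{\mathcal A})$ with $\|\sqrt{k}\,1_{\mathcal A}\|=\sqrt{k}<1$, so that Theorem \ref{Th1} applies directly. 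The remaining verifications, namely the measurability and boundedness checks that make $T$ and $S$ well defined on $L^\infty(E)$, are routine.
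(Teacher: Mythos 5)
Your proposal is correct and follows essentially the same route as the paper: the same space $X=L^\infty(E)$ with the multiplication-operator metric $d(f,g)=M_{|f-g|}$ on $\mathcal A=B(L^2(E))$, the same estimate $\|d(Tx,Sy)\|\leq k\|d(x,y)\|$ (the paper derives it by computing $\sup_{\|\varphi\|=1}(M_{|Tx-Sy|}\varphi,\varphi)$, which is equivalent to your pointwise bound), and the same appeal to Corollary 2.1. Your added remark that the operator inequality $d(Tx,Sy)\preceq a^*d(x,y)a$ of Theorem \ref{Th1} is unavailable for this metric, with the scalar-metric workaround $d(f,g)=\|f-g\|_\infty 1_{\mathcal A}$, is sound but supplementary to what the paper does.
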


\begin{proof}
Let $X=L^\infty(E)$ be the set of essentially bounded measurable functions on $E$ and $B(L^2(E))$ be the set of bounded linear operators on a Hilbert space $L^2(E)$.
Consider $d\colon X\times X\rightarrow B(L^2(E))$ defined by
$d(f,g)=M_{|f-g|}$,
where $M_{|f-g|}$ is the multiplication operator on $L^2(E)$. Then $(X,B(L^2(E)),d)$ is a complete $C^*$-algebra-valued metric space.

Define $T,S\colon X\rightarrow X$ by
\begin{eqnarray*}
    \begin{array}{rcl}
T(x(t))&=&\int_E K_1(t,s,x(s))ds+g(t), \ \ t\in E,\\[5pt]
S(x(t))&=&\int_E K_2(t,s,x(s))ds+g(t), \ \ t\in E.
 \end{array}
\end{eqnarray*}

Notice that
\begin{eqnarray*}
    \begin{array}{rcl}
\|d(Tx,Sy)\|&=&\sup\limits_{\|\varphi\|=1}(M_{|Tx-Sy|}\varphi,\varphi)\\[5pt]
&=&\sum\limits_{\|\varphi\|=1}\int_E|\int_E (K_1(t,s,x(s))-K_2(t,s,y(s)))ds|\varphi(t)\overline{\varphi(t)}dt\\[5pt]
&\leqslant&\sup\limits_{\|\varphi\|=1}\int_E\int_E |K_1(t,s,x(s))-K_2(t,s,y(s))|ds\varphi(t)\overline{\varphi(t)}dt\\[5pt]
&\leqslant&\sup\limits_{\|\varphi\|=1}k\int_E\int_E \varphi(t,s)|x(s)-y(s)|ds\varphi(t)\overline{\varphi(t)}dt\\[5pt]
&\leqslant&\sup\limits_{\|\varphi\|=1}k\int_E\int_E \varphi(t,s)ds|\varphi(t)|^2dt\|x-y\|_\infty\\[5pt]
&\leqslant&k\sup\limits_{t\in E}\int_E\varphi(t,s)ds
\sup\limits_{\|\varphi\|=1}
\int_E |\varphi(t)|^2dt\|x-y\|_\infty\\[5pt]
&\leqslant&k\|d(x,y)\|.
 \end{array}
\end{eqnarray*}
Thus it is verified that the mappings $T$ and $S$ satisfy all the conditions of Corollary 2.1, and then $T$ and $S$ have a unique common fixed point, which is equivalent to that
 the integral equations (2.7) have a unique common solution in $L^2(E)$.
\end{proof}

 \end{document}